\theoremstyle{plain}
\newtheorem{theorem}{Theorem}
\newtheorem{lemma}{Lemma}
\newtheorem{corollary}[lemma]{Corollary}
\theoremstyle{definition}
\theoremstyle{remark}
\title{On the Maximum Density of \\ Graphs with Good Edge-Labellings}
\author{Abbas Mehrabian\thanks{Department of Combinatorics and Optimization, University of Waterloo, Waterloo, ON, Canada} \and Dieter Mitsche\thanks{Universit\'{e} de Nice Sophia-Antipolis, Laboratoire J-A Dieudonn\'{e}, Parc Valrose, 06108 Nice cedex 02} \and Pawe\l{} Pra\l{}at\thanks{Department of Mathematics, Ryerson University, Toronto, ON, Canada}}
\date{}
\begin{document}

\maketitle

\begin{abstract}
A \emph{good edge-labelling} of a simple, 
finite graph is a labelling of its edges with real numbers such that, for every ordered pair of vertices $(u,v)$, there is at most one nondecreasing path from $u$ to $v$.
In this paper we prove that any graph on $n$ vertices that admits a good edge-labelling has at most $n \log_2(n)/2$ edges,
and that this bound is tight for infinitely many values of $n$.
Thus we significantly improve on the previously best known bounds.
The main tool of the proof is a combinatorial lemma which might be of independent interest.
For every $n$ we also construct an $n$-vertex graph that admits a good edge-labelling
and has $n\log_2(n)/2 - O(n)$ edges.
\end{abstract}

\section{Introduction}

Let $G$ be a finite, simple graph.
A \emph{good edge-labelling} of $G$ is a labelling of its edges with real numbers such that, for any ordered pair of vertices $(u,v)$, there is at most one nondecreasing path from $u$ to $v$. This notion was introduced in~\cite{origin} to solve wavelength assignment problems for specific categories of graphs. We say $G$ is \emph{good} if it admits a good edge-labelling.

Let $f(n)$ be the maximum number of edges of a good graph on $n$ vertices. Ara{\'u}jo, Cohen, Giroire, and Havet~\cite{gn} initiated the study of this function. 
They observed that hypercube graphs are good and that any graph containing $K_3$ or $K_{2,3}$ is not good.
From these observations they concluded that if $n$ is a power of two, then
$$
f(n) \ge \frac{n}{2} \log_2(n) \:,
$$
and that for all $n$,
$$
f(n) \leq \frac{n\sqrt n}{\sqrt 2} + O\left(n^{4/3}\right) \:.
$$
The first author of this paper proved that any good graph whose maximum degree is within a constant factor of its average degree (in particular, any good regular graph) has at most $n^{1+o(1)}$ edges---see~\cite{abbas} for more details. 

Before we state the main result of this paper, we need one more definition. 
Let $b(n)$ be the function that counts the total number of $1$'s in the binary expansions of all integers from $0$ up to $n-1$. This function was studied in~\cite{binary}. 
Our main result is the following theorem.

\begin{theorem}\label{thm:main}
For all positive integers $n$,
$$
\frac{n}{2} \log_2 \left( \frac{3n}{4} \right) \leq b(n) \leq f(n) \leq \frac{n}{2} \: \log_2(n) \:.
$$
\end{theorem}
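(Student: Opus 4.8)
The bound splits into three independent claims: the estimate $\frac n2\log_2(3n/4)\le b(n)$ about binary digit sums, the existence statement $b(n)\le f(n)$, and the upper bound $f(n)\le \frac n2\log_2 n$. I would treat them separately, since only the last truly uses the hypothesis that the graph is good; the first is pure arithmetic and the second is a construction.

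For the leftmost inequality I would work with the digit-sum recurrence. Writing $s(m)$ for the number of $1$'s in the binary expansion of $m$, splitting $\{0,\dots,2m-1\}$ into even and odd residues gives the clean identity $b(2m)=2b(m)+m$, and similarly $b(2m+1)=2b(m)+m+s(m)$. The first identity is exactly the doubling rule for the target function: if one sets $h(n)=b(n)-\frac n2\log_2 n$, then $h(2m)=2h(m)$, so the ratio $h(n)/n$ is invariant under doubling and the problem reduces to controlling $h(n)/n$ on odd $n$. Since $b(2^k)=k\,2^{k-1}=\frac{2^k}{2}\log_2(2^k)$, powers of two give $h/n=0$, and I would show that the most negative value of $h(n)/n$ is bounded below by $\frac12\log_2(3/4)$; the constant $3/4$ is precisely what is needed to absorb the worst residue classes. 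This step is elementary but not vacuous, and I expect a short case analysis (or an explicit evaluation of $b$ via the column-count formula $b(n)=\sum_{i\ge0}c_i(n)$, where $c_i(n)$ counts the integers in $[0,n)$ with bit $i$ set) to be the cleanest route.

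For the middle inequality I would exhibit a single good graph on $n$ vertices with exactly $b(n)$ edges. Take $k$ with $2^k\ge n$ and let $H$ be the subgraph of the hypercube $Q_k$ induced on the vertex set $\{0,1,\dots,n-1\}$, where vertices are identified with binary strings of length $k$. Hypercubes are good, and goodness is inherited by every subgraph (a good edge-labelling restricts, and deleting vertices or edges cannot create new nondecreasing paths), so $H$ is good. To count its edges, note that each hypercube edge joins some $m$ to $m\oplus 2^t$; charging every edge to its larger endpoint, an edge incident to $m$ and going downward corresponds to clearing one of the $1$-bits of $m$, and the resulting smaller number automatically lies in $\{0,\dots,n-1\}$. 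Hence the number of edges equals $\sum_{m=0}^{n-1}s(m)=b(n)$, giving $f(n)\ge b(n)$.

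The rightmost inequality is the heart of the theorem, and the place I expect the real difficulty. First I would perturb the labels so that they become distinct; then a nondecreasing path is the same as a strictly increasing one, and the good property says that for every ordered pair there is at most one increasing path. The naive exploitation of this axiom only counts increasing paths of length two: mapping each such path $u-w-v$ to the ordered pair $(u,v)$ is injective by uniqueness, which yields $\sum_w\binom{\deg w}{2}\le n(n-1)$ and hence, by convexity, the earlier bound $f(n)\le \frac{n\sqrt n}{\sqrt2}+O(n^{4/3})$. To reach $\frac n2\log_2 n$ this must be replaced by an argument with multiplicative, rather than polynomial, savings---the appearance of a logarithm and of the extremal hypercubes strongly suggests aiming for a divide-and-conquer recurrence of the shape $f(n)\le 2f(n/2)+n/2$, which solves to exactly $\frac n2\log_2 n$ and mirrors the decomposition $Q_k=Q_{k-1}\times K_2$ into two halves joined by a matching. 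The main obstacle---and the role I would expect the paper's combinatorial lemma to play---is to show, using only the unique-increasing-path axiom, that every good graph admits a balanced vertex partition whose cut is essentially a matching (at most $n/2$ crossing edges), with both sides again good as induced subgraphs; establishing such a splitting purely from the labelling, without any hypercube structure to lean on, is where the genuine work lies.
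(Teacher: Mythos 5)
Your treatment of the middle inequality is correct and matches the paper (induced subgraph of the hypercube on $\{0,1,\dots,n-1\}$, edges counted by charging each edge to its larger endpoint, goodness inherited from the hypercube). But the rightmost inequality $f(n)\leq \frac{n}{2}\log_2(n)$ --- which you yourself identify as the heart of the theorem --- is not proved in your proposal: you reduce it to the claim that every good graph admits a balanced vertex partition whose cut has at most $n/2$ edges, and then explicitly leave that claim open (``where the genuine work lies''). That claim is the entire difficulty, not a deferred routine step: nothing in the unique-nondecreasing-path property visibly produces a sparse balanced cut, the paper establishes no such structural decomposition anywhere, and it is not even clear the claim is true. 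So what you have is a plan whose central lemma is missing, and the plan does not match how the bound is actually obtainable. (The leftmost inequality is likewise only sketched in your write-up; the paper simply cites McIlroy's result $b(n)\geq \frac{n}{2}\log_2\left(\frac{3n}{4}\right)$, so this is a minor point, but your case analysis is not carried out either.)

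The paper's actual mechanism for the upper bound avoids decomposing the graph altogether. Fix an arbitrary labelling, perturb to make labels distinct, sort the edges $e_1,\dots,e_m$ by label, and insert them one at a time; for each vertex $v$ track the number $a_v$ of nice (nondecreasing, non-backtracking) walks ending at $v$. Inserting an edge $uv$ replaces both $a_u$ and $a_v$ by $a_u+a_v$, and goodness forces $\sum_v a_v\leq n^2$ throughout (otherwise there are two nice walks between some ordered pair, or a closed nice walk, either of which kills goodness). This turns the problem into a one-player game: starting from the all-ones configuration, how many ``merge two numbers into their sum'' moves can be made while keeping the total at most $n^2$? The paper's key combinatorial lemma (Theorem~\ref{thm:greedy}) is that the greedy strategy --- always merge the two smallest numbers --- is optimal; under greedy the minimum entry doubles every $n/2$ moves, which yields $m\leq \frac{n}{2}\log_2(n)$ for even $n$, and odd $n$ is handled via $f(2n)\geq 2f(n)+n$ from Corollary~\ref{cor:recursive} together with the even case. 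Note the direction of the recursion: the matching-plus-two-good-graphs lemma is used only for \emph{lower} bounds on $f$ (gluing constructions), never to split an arbitrary good graph, which is the step your approach would need and cannot supply.
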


It follows that the asymptotic value of $f(n)$ is 
$n \log_2(n) / 2 - O(n)$. 
Note that Theorem~\ref{thm:main} implies that \emph{any} good graph on $n$ vertices has at most $n\log_2(n)/2$ edges, 
significantly improving the previously known upper bounds.
Moreover, this bound is tight if $n$ is a power of two.
We also give an explicit construction of a good graph with $n$ vertices and $b(n)$ edges for every $n$.

\section{The Proofs}

This section is devoted to prove the main result, Theorem~\ref{thm:main}.

\subsection{The upper bound}

For a graph $G$, an edge-labelling $\phi:E(G) \rightarrow \mathbb{R}$, and an integer $t \geq 0$, a \emph{nice $t$-walk from $v_0$ to $v_t$} is a sequence $v_0v_1\dots v_t$ of vertices such that $v_{i-1}v_i$ is an edge for $1\leq i\leq t$, and $v_{i-1}\neq v_{i+1}$ and $\phi(v_{i-1} v_{i}) \leq \phi (v_{i}v_{i+1})$ for $1 \leq i \leq t-1$. We call $v_t$ the \emph{last vertex} of the walk. When $t$ does not play a role, we simply refer to a \emph{nice walk}.  The existence of a self-intersecting nice walk implies that the edge-labelling is not good: let $v_0 v_1 \dots v_t$ be a shortest such walk with $v_0 = v_t$. Then there are two nondecreasing paths $v_0 v_1 \dots v_{t-1}$ and $v_0 v_{t-1}$ from $v_0$ to $v_{t-1}$. Thus if for some pair of distinct vertices $(u,v)$ there are two nice walks from $u$ to $v$, then the labelling is not good. Also, if for some vertex $v$, there is a nice $t$-walk from $v$ to $v$ with $t>0$, then the labelling is not good. Consequently, if the total number of nice walks is larger than $2\binom{n}{2}+n=n^2$, then the labelling is not good.

The following lemma will be very useful.

\begin{lemma}\label{lem:Edgelabel}
Let $G$ and $H$ be graphs with good edge-labellings on disjoint vertex sets. Then if we add a matching between the vertices of $G$ and $H$ (i.e., add a set of edges, such that each added edge has an endpoint in either of $V(G)$ and $V(H)$, and every vertex in $V(G) \cup V(H)$ is incident to at most one added edge), then the resulting graph is good.
\end{lemma}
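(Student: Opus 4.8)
The plan is to give each edge of the combined graph a label and then argue that any self-intersecting nice walk, or any pair of distinct nice walks with common endpoints, would have to live entirely inside $G$ or entirely inside $H$, contradicting the goodness of the two original labellings. The key idea is to relabel so that the matching edges are forced to sit at the very end of any nondecreasing sequence of labels: concretely, I would keep the given labels $\phi_G$ and $\phi_H$ but compose each with an order-preserving map into a bounded interval (say shift $\phi_G$ and $\phi_H$ into $(-\infty, 0)$ by applying $x \mapsto -e^{-x}$ or simply noting the labels are finite and subtracting a large constant), and then assign every matching edge the \emph{same} label $M$, chosen strictly larger than every label appearing in $G$ or $H$. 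The point of making all matching labels equal (not merely large) is that a nondecreasing walk can never traverse two matching edges in a row with the equality-or-increase condition while also being a genuine walk, since two matching edges share no vertex.

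First I would set up this labelling precisely and record the two structural facts it guarantees. Fact one: because each vertex is incident to at most one matching edge, no nice walk can use two matching edges consecutively (there is no vertex joining two matching edges). Fact two: since $M$ exceeds every internal label, once a nice walk uses a matching edge $e$, the nondecreasing condition forces every subsequent edge to have label $\geq M$, hence also to be a matching edge; combined with Fact one, this means a matching edge can only occur as the \emph{last} edge of a nice walk. Therefore every nice walk has the form $P \cdot e$, where $P$ is a (possibly empty) nice walk using only $G$-edges or only $H$-edges, and $e$ is either absent or a single trailing matching edge.

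The main step is then the case analysis ruling out bad configurations. For a self-intersecting nice walk $v_0 \dots v_t$ with $v_0 = v_t$, the trailing structure shows that at most one matching edge is used, and it must be the final edge $v_{t-1}v_t$; but then the walk crosses between $V(G)$ and $V(H)$ an odd number of times (exactly once if the single crossing edge is present), which is impossible for a closed walk that starts and ends in the same part, and if no matching edge is used the walk lies inside $G$ or inside $H$, contradicting their goodness. For two distinct nice walks from $u$ to $v$, I would split on whether $u,v$ lie in the same part: if they do, neither walk can use a matching edge (again by the parity/crossing count, since a single trailing matching edge would land in the opposite part), so both walks lie inside one of $G, H$, contradicting goodness there; if $u,v$ lie in different parts, each walk must use exactly one matching edge as its final edge, and that final edge is forced to be the unique matching edge incident to $v$, so deleting it yields two distinct nice walks from $u$ to the common neighbour of $v$ inside a single part, again a contradiction.

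I expect the main obstacle to be handling the boundary and degenerate cases cleanly rather than any deep difficulty: in particular, checking that the trailing matching edge is genuinely forced and unique (using that $v$ has at most one matching neighbour), confirming the nondegeneracy condition $v_{i-1}\neq v_{i+1}$ is preserved when a trailing edge is deleted, and making sure the closed-walk parity argument correctly accounts for the case of the empty walk. A careful statement of ``every nice walk is $G$-internal, $H$-internal, or such a walk followed by one matching edge'' does most of the work, after which the crossing-parity bookkeeping finishes each subcase.
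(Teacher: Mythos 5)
Your proposal is correct and follows essentially the same approach as the paper: the paper also labels the matching edges with values strictly larger than every label in $G$ and $H$ (using $M, M+1, M+2, \dots$ rather than a single repeated label $M$, an immaterial difference since no two matching edges share a vertex) and leaves the verification as an exercise, which your structural facts and case analysis carry out correctly. Your write-up simply fills in the details the paper summarizes as ``not hard to verify.''
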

\begin{proof}
Consider a good edge-labelling  of $G$ and $H$, and let $M$ be a number greater than all existing labels. Then label the matching edges with $M, M+1, M+2$, etc. It is not hard to verify that the resulting edge-labelling is still good.
\end{proof}

\begin{corollary}
\label{cor:recursive}
We have $f(1) = 0$ and for all $n > 1$,
$$f(n) \geq \max \Big\{ f(n_1) + f(n_2) + \min \{ n_1, n_2\}: 1 \leq n_1, 1\leq n_2, n_1 + n_2 = n \Big\} \:.$$
\end{corollary}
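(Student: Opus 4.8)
The plan is to prove Corollary~\ref{cor:recursive} as a direct application of Lemma~\ref{lem:Edgelabel}. The base case $f(1)=0$ is immediate, since a single vertex has no edges. For the recursive bound, the idea is to exhibit, for every valid decomposition $n = n_1 + n_2$ with $n_1, n_2 \geq 1$, a good graph on $n$ vertices having exactly $f(n_1) + f(n_2) + \min\{n_1, n_2\}$ edges; taking the maximum over all such decompositions then yields the claimed lower bound on $f(n)$.

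First I would fix an optimal decomposition and let $G$ be an extremal good graph on a vertex set of size $n_1$ (so $|E(G)| = f(n_1)$) and $H$ an extremal good graph on a disjoint vertex set of size $n_2$ (so $|E(H)| = f(n_2)$). Both exist by the definition of $f$, and by construction they carry good edge-labellings on disjoint vertex sets, so the hypothesis of Lemma~\ref{lem:Edgelabel} is met. Without loss of generality assume $n_1 \leq n_2$, so $\min\{n_1, n_2\} = n_1$. I would then add a matching of size exactly $n_1$ between $V(G)$ and $V(H)$: since $n_1 \leq n_2$, I can pair up each of the $n_1$ vertices of $G$ with a distinct vertex of $H$, so that every vertex of $G$ is saturated and each added edge has one endpoint in each part with no vertex incident to more than one matching edge. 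This is a legitimate matching in the sense of the lemma.

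By Lemma~\ref{lem:Edgelabel}, the resulting graph $G'$ on $n_1 + n_2 = n$ vertices is good. Its edge count is $|E(G)| + |E(H)| + n_1 = f(n_1) + f(n_2) + \min\{n_1, n_2\}$, since the matching contributes exactly $n_1$ new edges and no edges are lost. As $G'$ is good on $n$ vertices, the definition of $f$ gives $f(n) \geq |E(G')| = f(n_1) + f(n_2) + \min\{n_1, n_2\}$. Because this holds for every decomposition $n = n_1 + n_2$ with $n_1, n_2 \geq 1$, it holds for the maximum over all of them, establishing the corollary.

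There is no serious obstacle here; the only point requiring a little care is the bookkeeping on the matching size, namely verifying that when $n_1 \leq n_2$ one really can add a full matching of $\min\{n_1,n_2\} = n_1$ edges obeying the degree constraint in the lemma. This is straightforward, but it is the place where the $\min\{n_1, n_2\}$ term enters and should be stated explicitly.
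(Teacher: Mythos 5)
Your proof is correct and is exactly the argument the paper intends: the corollary is stated without an explicit proof precisely because it follows from Lemma~\ref{lem:Edgelabel} in the way you describe, by joining extremal good graphs on $n_1$ and $n_2$ vertices with a matching of size $\min\{n_1,n_2\}$. Nothing is missing; your careful check that such a matching exists and respects the degree constraint is the only detail worth spelling out, and you did.
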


The proof of the upper bound in Theorem~\ref{thm:main} relies on the analysis of a one-player game, which is defined next. 
The player, who will be called Alice henceforth, starts with $n$ sheets of paper, on each of which a positive integer is written. 
In every step, Alice does an operation as follows. She chooses any two sheets. Assume that the numbers written on them are $a$ and $b$. She erases these numbers, and writes $a+b$ on both sheets. We denote the move by a pair $(a,b)$. The \emph{configuration} of the game is a multiset of size $n$, containing the numbers written on the sheets, in which the multiplicity of number $x$ equals the number of sheets on which $x$ is written. Notice that there may be multiple pairs of sheets with numbers $a$ and $b$ written on them, and we treat choosing any such pair as the same move, since they all result in the same configuration (that is, all moves are isomorphic). Note also that the moves $(a,b)$ and $(b,a)$ have the same effect.

Clearly, after playing the move $(a,b)$, the sum of the numbers is increased by $a+b$. The aim of the game is to keep the sum of the numbers smaller than a certain threshold. Let $S$ be  the \emph{starting configuration} of the game, namely, a multiset of size $n$ containing the numbers initially written on the sheets, and let $k\geq0$ be an integer. We denote by $opt(S,k)$ the smallest sum Alice can get after doing $k$ operations. An intuitively good-looking strategy is the following: in each step, choose two sheets with the smallest numbers. We call this the \emph{greedy} strategy, and show that it is indeed an optimal strategy. 
Specifically, we prove the following theorem, which may be of independent interest.

\begin{theorem} \label{thm:greedy}
For any starting configuration $S$ and any nonnegative integer $k$, if Alice plays the greedy strategy, then the sum of the numbers after $k$ moves equals $opt(S,k)$.
\end{theorem}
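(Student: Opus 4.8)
The plan is to prove the stronger, per-configuration statement that the greedy strategy attains $opt(S,k)$ for every starting configuration $S$ and every $k$, by induction on the number of moves $k$. The case $k=0$ is immediate, since no move is made. For the inductive step I would use the obvious recursion
$$opt(S,k) = \min\{\, opt(S',k-1) : S' \text{ is obtained from } S \text{ by one move}\,\},$$
together with the fact that the greedy value on $S$ equals $opt(S_g,k-1)$, where $S_g$ denotes the configuration obtained from $S$ by combining two smallest numbers: indeed the greedy play on $S$ is its first (greedy) move followed by greedy play on $S_g$, and the latter equals $opt(S_g,k-1)$ by the induction hypothesis. Hence everything reduces to the single assertion that \emph{combining two smallest numbers is an optimal first move}, i.e. $opt(S_g,k-1)\le opt(S_m,k-1)$ for every configuration $S_m$ reachable from $S$ by one move.

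To establish first-move optimality I would argue by exchange: starting from an optimal strategy for $(S,k)$, transform it, without increasing the final sum, into one whose first move combines two smallest sheets. Writing the sorted starting values as $x_1\le x_2\le\cdots\le x_n$, the relevant structural tool is that a single move replaces two chosen numbers $a\le b$ by two copies of $a+b\ge b$, so every order statistic of the configuration can only increase; consequently at every time the two smallest numbers are at least $x_1$ and $x_2$, every move costs at least $x_1+x_2$, and the greedy first move realizes this minimum cost exactly. One then locates, in the optimal strategy, the moves that first involve the two smallest sheets and reroutes them so that these two sheets are combined at the outset, as in the sibling-exchange step of Huffman's theorem, checking that each local modification leaves the final sum unchanged or smaller.

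Along the way it is convenient to record two facts that both guide the exchange and suggest the right invariant: the costs incurred by the greedy strategy are nondecreasing (combining the two smallest numbers can only raise the minimum pair-sum), and, more ambitiously, one expects greedy to minimize the whole sorted vector of move-costs coordinatewise, not merely its total. Carrying a domination of this kind between the configuration produced on the greedy branch and the configuration produced on the competing branch is what would let the induction close.

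The hard part, and the reason this is genuinely a combinatorial lemma rather than a one-line greedy argument, is controlling the effect of the first-move exchange on \emph{all} later moves. Pulling the two smallest numbers out of the configuration at step one is locally cheapest, but it also destroys the cheap continuations that an optimal strategy might have been saving those small numbers for; thus the guaranteed saving of $(x_i+x_j)-(x_1+x_2)$ on the first move must be weighed against possibly costlier subsequent moves. A naive monotonicity of the form ``a smaller configuration stays cheaper forever'' is false in general, so the argument has to exploit that the two branches descend from a common configuration and differ in a very restricted way, propagating the comparison move by move rather than appealing to a blanket dominance order.
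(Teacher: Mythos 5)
Your reduction is sound and matches the paper's skeleton: induct on $k$, and show that some optimal $k$-step strategy opens with the greedy move on the two smallest sheets. But the heart of the matter --- the exchange argument --- is never actually carried out, and the route you gesture at would not work as described. The Huffman sibling-exchange does not transfer to this game: when Alice plays $(a,b)$, \emph{both} sheets receive $a+b$ and both keep participating, the number of sheets never shrinks, and the move count $k$ is unrelated to $n$; so ``rerouting the moves that first involve the two smallest sheets'' forces you to compare two strategies whose configurations diverge at step one and never re-synchronize. You acknowledge this yourself in your final paragraph (``propagating the comparison move by move''), but that propagation is precisely the unproven content. Likewise, your candidate invariant --- that greedy dominates the whole sorted vector of move-costs coordinatewise --- is stated as an expectation, not established, and proving it is essentially as hard as the theorem itself.

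The paper closes this gap with two devices your proposal is missing. First, a linear framework: the final sum is always $\sum_{i=1}^n c_i s_i$, where the coefficient vector $(c_1,\dots,c_n)$ is ``$k$-feasible'' independently of the starting values, and any permutation of a feasible vector is again feasible. This lets one compare $opt$ across \emph{different} configurations by reusing a single optimal coefficient vector with two of its coordinate blocks swapped; a sign analysis on $w_{m+1}+w_{m+2}-w_{m+3}-w_{m+4}$ (Lemma~\ref{lem:weights}) then yields the dominance relations $S\leq T$ and $S\leq U$ that no blanket elementwise monotonicity provides. Second --- and this is what makes only a \emph{two-move} exchange ever necessary --- the induction hypothesis is applied not to the greedy branch but to the competing optimal strategy: after its non-greedy first move, the IH pins its \emph{second} move to be greedy on the resulting configuration, so the unknown optimal strategy acquires a completely explicit two-move prefix, and Lemma~\ref{lem_3best} and Corollary~\ref{cor_opt} convert that prefix into one starting with $(s_1,s_2)$, with the case split governed by whether the merged value $s_i+s_j$ is itself among the two smallest of the new configuration. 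Without something playing the role of these two steps, your plan remains a statement of the difficulty rather than a proof of the theorem.
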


Before proving Theorem~\ref{thm:greedy}, we show how this implies our upper bound.

\begin{proof}
[Proof of the upper bound of Theorem~\ref{thm:main}]
\label{thm:upper_bd}
Let $G$ be a graph with $n$ vertices and $m>{n} \log_2(n) / 2$ edges.
We need to show that $G$ does not have a good edge-labelling.
Consider an arbitrary edge-labelling $\phi:E(G) \rightarrow \mathbb{R}$. 
Enumerate the edges of $G$ as $e_1,e_2,\dots,e_m$ such that
$$\phi(e_1) \leq \phi(e_2) \leq \dots \leq \phi(e_m) \:.$$
We may assume that the inequalities are strict.
Indeed, if some label $L$ appears $p > 1$ times, we can assign the labels $L,L+1,\ldots,L+(p-1)$ to the edges originally labelled $L$, 
and increase  by $p$ the label of edges with original label larger than $L$. 
It is easy to see that the modified edge-labelling is still good, and by repeatedly applying this operation all ties are broken.

Let us denote by $G_i$ the subgraph of $G$ induced by $\{e_1,e_2,\dots,e_i\}$.
For each vertex $v$ and $0\leq i \leq m$, let $a_v^{(i)}$ be the number of nice walks with last vertex $v$ in $G_i$. 
Clearly, $a_v^{(0)} = 1$ for all vertices $v$.
Suppose the graph is initially empty and we add the edges $e_1,e_2,\dots,e_m$ one by one, in this order. Fix an $i$ with $1\leq i \leq m$. Let $u$ and $v$ be the endpoints of $e_i$. 
After adding the edge $e_i$, for any $t$, any nice $t$-walk with last vertex $u$ (respectively, $v$)  
in $G_{i-1}$ can be extended via $e_i$ to a nice $(t+1)$-walk with last vertex $v$ (respectively, $u$) in $G_i$. 
So, we have $a_u^{(i)} = a_v^{(i)} = a_u^{(i-1)} + a_v^{(i-1)}$ and $a_w^{(i)} = a_w^{(i-1)}$ for $w\notin\{u,v\}$.

Thus we are in the same setting as the one-player game described before, with starting configuration $S = \{1,1,\dots,1\}$, so we have
$$
\sum_{v\in V(G)} a_v^{(m)} \geq opt(S, m)\:.
$$
Hence, in order to prove that $\phi$ is not a good edge-labelling, it is sufficient to show that $opt(S,m) > n^2$.

Let $m_0$ be the largest number for which $opt(S,m_0) \leq n^2$,
and let $\alpha = \lfloor \log_2(n) \rfloor$.
First, assume that $n$ is even.
By Theorem~\ref{thm:greedy}, we may assume that Alice plays according to the greedy strategy.
The smallest number on the sheets is initially 1, and is doubled after every $n/2$ moves.
Hence after $\alpha n/2$ moves, the smallest number becomes $2^{\alpha}$,
so the sum of the numbers would be 
$2^{\alpha} n$.
In every subsequent move, the sum is increased by 
$2^{\alpha+1}$,
so Alice can play at most $(n^2 - 2^{\alpha} n) / 2^{\alpha+1}$ more moves before the sum of the numbers becomes greater than $n^2$.
Consequently,
$$m_0 \leq \alpha \: \frac{n}{2} + \frac{n (n-2^{\alpha})}{2^{\alpha+1}} \:.$$

Now, define $h(x) := \log_2(x) - x + 1$.
Then $h$ is concave {in $[1,2]$} and $h(1)=h(2)=0$, which implies that $h(x) \geq 0$ for all $x \in [1,2]$.
In particular, for $x_0 = n / 2^{\alpha}$, we have
$$\frac{n-2^{\alpha}}{2^{\alpha}} = x_0-1 \leq \log_2(x_0) = \log_2\left(\frac{n}{2^{\alpha}}\right) = \log_2(n) - \alpha \:.$$
Therefore,
$$m_0 \leq \frac{n}{2}\: \alpha + \frac{n}{2} \: \frac{n - 2^{\alpha}}{2^{\alpha}} \leq \frac{n}{2}\: \log_2(n) < m\:,$$
which completes the proof.

Finally, assume that $n$ is odd.
Since $2n$ is even, we have
$$f(2n) \leq n \log_2(2n) = n\log_2(n) + n \:.$$
On the other hand, by Corollary~\ref{cor:recursive},
$$f(2n) \geq 2f(n) + n \:.$$
Combining these inequalities gives 
$$f(n) \leq \frac{n}{2} \log_2(n) \:,$$
completing the proof of the lemma.
\end{proof}

The rest of this section is devoted to prove Theorem~\ref{thm:greedy}. Let $S = \{s_1,s_2,\dots,s_n\}$ be the starting configuration of the game. Note that after each step, the sum of the  numbers is of the form $\sum_{i=1}^{n} c_i s_i$ for some positive integers $\{c_i\}_{i=1}^{n}$. The sequence $\{c_i\}_{i=1}^{n}$ depends only on the sheets Alice chooses in each step, and does not depend on $\{s_i\}_{i=1}^{n}$. We say that $(c_1,c_2,\dots,c_n)$ is \emph{$k$-feasible} if {after $k$ steps} Alice can actually get a sum of the form $\sum_{i=1}^{n} c_i s_i$. For example, $(1,1,\dots,1)$ is the only $0$-feasible $n$-tuple, and there exist $\binom{n}{2}$ different 1-feasible $n$-tuples, one of them being $(2,2,1,1,\dots,1)$. Notice that for any permutation $\pi$ of $\{1,2,\dots,n\}$, if $(c_1,c_2,\dots,c_n)$ is $k$-feasible, then so is $(c_{\pi(1)},c_{\pi(2)},\dots,c_{\pi(n)})$, since Alice can first permute the sheets according to the permutation $\pi$, and then apply the same strategy as before.

For multisets $S$ and $T$ of size $n$, we write $S \leq T$ if for all $k \geq 0$ we have $opt(S,k) \leq opt(T,k)$. Note that if we can arrange the elements of $S$ and $T$ as $S=\{s_1,s_2,\dots,s_n\}$ and $T=\{t_1,t_2,\dots,t_n\}$ such that $s_i \leq t_i$ holds for all $1\leq i \leq n$, then $S \leq T$.

First, we make two useful observations.

\begin{lemma}\label{lem_3best}
Let $k\geq 2 $ and assume that the starting configuration is
$$
S = \{a, b, c, x_1, x_2, \dots, x_m \},
$$
where $a\leq b \leq c$. Also suppose that either there is an optimal $k$-step strategy in which Alice plays $(b,c)$ and $(a,b+c)$ in the first and second steps, or there is an optimal $k$-step strategy in which Alice plays $(a,c)$ and $(b,a+c)$ in the first and second steps. Then, there exists an optimal $k$-strategy in which Alice plays $(a,b)$ and $(c,a+b)$ in the first and second steps.
\end{lemma}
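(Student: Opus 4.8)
The plan is to run an exchange argument comparing the configurations produced after the first two moves in each of the three strategies. First I would write these down explicitly. Starting from the triple $\{a,b,c\}$, playing $(a,b)$ and then $(c,a+b)$ produces $\{a+b+c,\,a+b+c,\,a+b\}$; playing $(b,c)$ and then $(a,b+c)$ produces $\{a+b+c,\,a+b+c,\,b+c\}$; and playing $(a,c)$ and then $(b,a+c)$ produces $\{a+b+c,\,a+b+c,\,a+c\}$. In every case the sheets $x_1,\dots,x_m$ are untouched, so the three resulting configurations agree in all entries except one, which is $a+b$, $b+c$, or $a+c$ respectively. Denote these full configurations by $C_P$ (the proposed one), $C_1$, and $C_2$.

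The key observation is that, since $a\le b\le c$, we have $a+b\le a+c$ and $a+b\le b+c$, so $a+b$ is the smallest of the three possible residual values. Hence $C_P$ is dominated entrywise by both $C_1$ and $C_2$, and by the monotonicity observation stated just before the lemma this yields $C_P\leq C_1$ and $C_P\leq C_2$; in particular $opt(C_P,k-2)\leq opt(C_1,k-2)$ and $opt(C_P,k-2)\leq opt(C_2,k-2)$.

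Now I would invoke the hypothesis. Assume first that there is an optimal $k$-step strategy whose first two moves are $(b,c)$ and $(a,b+c)$; the case of $(a,c)$ and $(b,a+c)$ is identical with $C_2$ in place of $C_1$. After these two moves the configuration is exactly $C_1$, and the remaining $k-2$ moves act on $C_1$. Since the whole strategy is optimal and $opt(\cdot,\cdot)$ records only the final sum, its tail must be optimal on $C_1$, giving $opt(S,k)=opt(C_1,k-2)$. On the other hand, consider the strategy that plays $(a,b)$ and $(c,a+b)$ first, reaching $C_P$, and then plays an optimal $(k-2)$-step continuation on $C_P$. This is a legal $k$-step strategy for $S$, so the sum it attains, namely $opt(C_P,k-2)$, is at least $opt(S,k)$. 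Combining with the inequality above, $opt(S,k)\leq opt(C_P,k-2)\leq opt(C_1,k-2)=opt(S,k)$, so equality holds throughout and this strategy is optimal, as required.

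The argument is essentially complete once the three configurations are written out; the only step demanding care is the identity $opt(S,k)=opt(C_1,k-2)$, which rests on the fact that an optimal strategy restricted to its tail must itself be optimal on the intermediate configuration. I expect this bookkeeping---keeping the residual entry $a+b$ straight against $a+c$ and $b+c$, and correctly applying the entrywise-domination observation---to be the only place requiring attention; the underlying idea is simply that combining the two smallest of $a,b,c$ first leaves the smallest possible value on the third sheet, which can only help Alice afterwards.
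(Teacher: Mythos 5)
Your proposal is correct and follows essentially the same route as the paper: writing out the three post-two-move configurations, noting that $a+b$ is the smallest of the three residual values so the proposed configuration is dominated entrywise by the other two, and invoking the monotonicity observation to conclude $opt$-domination. The only difference is that you spell out the tail-optimality bookkeeping (that $opt(S,k)=opt(C_1,k-2)$ and that prepending the moves $(a,b),(c,a+b)$ to an optimal continuation yields an optimal strategy), which the paper leaves implicit.
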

\begin{proof}
In the first case, the configuration after the second step is
$$
U = \{b+c, a+b+c, a + b + c, x_1,x_2,\dots,x_m\}.
$$
In the second case, the configuration after the second step is
$$
V = \{a+c, a+b+c, a+b+c, x_1,x_2,\dots,x_m\}.
$$
In the third case, the configuration after the second step is
$$
T = \{a+b, a+b+c, a+b+c, x_1,x_2,\dots, x_m\}.
$$
Since $a\leq b\leq c$, we have $T \leq U$ and $T \leq V$, and therefore $opt(T,k-2) \leq opt(U,k-2)$ and $opt(T,k-2) \leq opt(V,k-2)$. Hence, the strategy starting with moves $(a,b)$ and $(c,a+b)$ is also an optimal $k$-step strategy.
\end{proof}

\bigskip

\begin{lemma}
\label{lem:weights}
Let $x_1,x_2,\dots,x_m,a,b,c,d$ be positive integers with $a \leq b \leq c \leq d$.
Define $S$, $T$, and $U$ as follows:
 \begin{align*}
S & := \{x_1,x_2, \dots, x_m, a + b, a + b, c + d, c + d\}\:;  \\
T & := \{x_1, x_2,\dots, x_m, a + c, a + c, b + d, b + d\}\:; \\
U & := \{x_1, x_2, \dots, x_m, a + d, a + d, b + c, b + c\} \:.
\end{align*}
Then, we have $S \leq T$ and $S \leq U$.
\end{lemma}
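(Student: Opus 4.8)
The plan is to argue entirely within the feasibility framework just introduced. For any configuration $R$ of size $n$ we have $opt(R,k)=\min\sum_i c_i r_i$, the minimum taken over all $k$-feasible tuples $(c_1,\dots,c_n)$; and since every permutation of a feasible tuple is feasible, in evaluating any such sum we may match the coefficients to the values by the rearrangement inequality, placing large coefficients against small values. The configurations $S$, $T$, $U$ share the ordinary values $x_1,\dots,x_m$ and differ only in four special slots, so the whole comparison should collapse to a comparison on those four slots.

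First I would fix a $k$-feasible tuple realizing $opt(T,k)$, matched optimally to the values of $T$. Let $\gamma_1,\dots,\gamma_m$ be the coefficients it places on $x_1,\dots,x_m$, and let $e_1,\dots,e_4$ be the coefficients on the four special slots. Since $a+c\le b+d$, optimal matching puts the two largest of the $e_j$ on the two copies of $a+c$. Writing $P$ for the sum of the two largest and $Q$ for the sum of the two smallest of $e_1,\dots,e_4$, so that $P\ge Q$, the special contribution to $opt(T,k)$ is $P(a+c)+Q(b+d)$.

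Next, using permutation-invariance of feasibility, I would build a feasible tuple for $S$ that keeps each $\gamma_j$ on $x_j$ and places the same four coefficients on the special slots of $S$, now putting the two largest on the two copies of $a+b$. This gives $opt(S,k)\le \sum_j \gamma_j x_j + P(a+b)+Q(c+d)$, whereas $opt(T,k)=\sum_j \gamma_j x_j + P(a+c)+Q(b+d)$. The ordinary parts cancel, so $S\le T$ reduces to $P(a+b)+Q(c+d)\le P(a+c)+Q(b+d)$, i.e. $(P-Q)(b-c)\le 0$, which holds because $P\ge Q$ and $b\le c$. Starting instead from an optimal feasible tuple for $U$ (with its own $P\ge Q$) and transferring to $S$ in the same way, the special contribution of $opt(U,k)$ is $\min\{P(a+d)+Q(b+c),\,P(b+c)+Q(a+d)\}$, and I would show $P(a+b)+Q(c+d)$ is at most each of the two terms via $(P-Q)(b-d)\le 0$ and $(P-Q)(a-c)\le 0$; both hold, giving $S\le U$.

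The step I expect to be the main obstacle is recognizing that naive value-by-value domination is unavailable: although $a+b\le a+c$, we also have $c+d\ge b+d$, so $S$ is not dominated by $T$ componentwise and the domination criterion stated above cannot be invoked directly. The resolution is to pass to the coefficient level and exploit that the larger coefficient sum $P$ always multiplies the smaller of the two paired values; this monotone pairing is exactly what makes the scalar inequalities come out with the correct sign. The one bookkeeping point needing care is that the coefficients $\gamma_j$ on the ordinary slots can be kept fixed when passing from $T$ (or $U$) to $S$, so that the ordinary contributions cancel; this is immediate from permutation-invariance, since the transferred tuple is merely a rearrangement of the chosen optimal tuple.
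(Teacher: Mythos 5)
Your proposal is correct and takes essentially the same route as the paper's proof: both transfer the optimal $k$-feasible tuple from $T$ (resp.\ $U$) to $S$ via permutation-invariance of feasible tuples, keep the coefficients on $x_1,\dots,x_m$ fixed so those contributions cancel, and reduce everything to scalar inequalities of the form $(P-Q)(b-c)\le 0$, $(P-Q)(b-d)\le 0$, $(P-Q)(a-c)\le 0$. The only (cosmetic) difference is that you normalize the optimal tuple upfront by the rearrangement inequality so a single transfer suffices, whereas the paper keeps the four coefficients as two pairs and instead does a two-case analysis on the sign of $w_{m+1}+w_{m+2}-w_{m+3}-w_{m+4}$.
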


\begin{proof}
Let $k \geq 0$.
We first show that $opt(S,k) \leq opt(T,k)$.
Let $w_1,w_2,\dots,w_n$ be such that
$$opt(T,k) = \sum_{i=1}^m w_i x_i + w_{m+1} (a+c) + w_{m+2} (a+c) + w_{m+3} (b+d) + w_{m+4} (b+d) \:.$$
Since
$(w_1,w_2,\dots,w_m, w_{m+1}, w_{m+2}, w_{m+3}, w_{m+4})$
and
$(w_1,w_2,\dots,w_m, w_{m+3}, w_{m+4}, w_{m+1}, w_{m+2})$
are $k$-feasible, we have
\begin{align*}
opt(S,k) & \leq \sum_{i=1}^m w_i x_i + w_{m+1} (a+b) + w_{m+2} (a+b) + w_{m+3} (c+d) + w_{m+4} (c+d)
\\ & = opt(T,k) + (w_{m+1}+w_{m+2} - w_{m+3} - w_{m+4}) (b-c) \:,
\end{align*}
and
\begin{align*}
opt(S,k) & \leq \sum_{i=1}^m w_i x_i + w_{m+3} (a+b) + w_{m+4} (a+b) + w_{m+1} (c+d) + w_{m+2} (c+d) \\
& = opt(T,k) + (w_{m+1}+w_{m+2}-w_{m+3}-w_{m+4}) (d-a) \:.
\end{align*}
Now, if $ w_{m+1}+w_{m+2} - w_{m+3} - w_{m+4}  \geq 0$,
then the first inequality gives $opt(S,k) \leq opt(T,k)$,
and otherwise, the second inequality gives $opt(S,k) \leq opt(T,k)$.

Similarly, we show that $opt(S,k) \leq opt(U,k)$.
Let $w_1,w_2,\dots,w_n$ be such that
$$opt(U,k) = \sum_{i=1}^m w_i x_i + w_{m+1} (a+d) + w_{m+2} (a+d) + w_{m+3} (b+c) + w_{m+4} (b+c) \:.$$
As before, let us notice that both
$(w_1,w_2,\dots,w_m, w_{m+1}, w_{m+2}, w_{m+3}, w_{m+4})$
as well as
$(w_1,w_2,\dots,w_m, w_{m+3}, w_{m+4}, w_{m+1}, w_{m+2})$
are $k$-feasible, hence we have
\begin{align*}
opt(S,k) & \leq \sum_{i=1}^m w_i x_i + w_{m+1} (a+b) + w_{m+2} (a+b) + w_{m+3} (c+d) + w_{m+4} (c+d)
\\ & = opt(U,k) + (w_{m+1}+w_{m+2} - w_{m+3} - w_{m+4}) (b-d) \:,
\end{align*}
and
\begin{align*}
opt(S,k) & \leq \sum_{i=1}^m w_i x_i + w_{m+3} (a+b) + w_{m+4} (a+b) + w_{m+1} (c+d) + w_{m+2} (c+d) \\
& = opt(U,k) + (w_{m+1}+w_{m+2}-w_{m+3}-w_{m+4}) (c-a) \:.
\end{align*}
Now, if $ w_{m+1}+w_{m+2} - w_{m+3} - w_{m+4}  \geq 0$,
then the first inequality gives $opt(S,k) \leq opt(U,k)$,
and otherwise, the second inequality gives $opt(S,k) \leq opt(U,k)$.
\end{proof}

Lemma~\ref{lem:weights} immediately implies the following corollary.
\begin{corollary}\label{cor_opt}
Let $k \geq 2$ and let $a\leq b \leq c \leq d$ be four numbers in the starting configuration. Suppose that either  there is an optimal $k$-step strategy in which Alice plays $(a,c)$ and $(b,d)$ in the first two steps, or there is an optimal $k$-step strategy in which Alice plays $(a,d)$ and $(b,c)$ in the first two steps. Then there exists an optimal $k$-step strategy in which Alice plays $(a,b)$ and $(c,d)$ in the first and second steps.
\end{corollary}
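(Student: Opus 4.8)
The plan is to derive Corollary~\ref{cor_opt} as a direct application of Lemma~\ref{lem:weights}. The corollary makes a statement about optimal strategies whose \emph{first two moves} combine the four smallest elements $a\le b\le c\le d$ in one of three possible pairings, and asserts that the pairing $\{(a,b),(c,d)\}$ is at least as good as the other two. The key observation is that after performing each pairing as the first two moves, the resulting configuration is precisely one of the three multisets $S$, $T$, $U$ from the lemma. Thus the combinatorial content is already packaged in Lemma~\ref{lem:weights}, and the corollary is essentially a translation.

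First I would compute the configuration reached after the two prescribed opening moves in each case. Playing $(a,b)$ and then $(c,d)$ replaces the four sheets by two copies of $a+b$ and two copies of $c+d$ (since writing $a+b$ on both chosen sheets produces two copies, and likewise for $c+d$), leaving the configuration $S=\{x_1,\dots,x_m,a+b,a+b,c+d,c+d\}$. Similarly, the pairing $(a,c),(b,d)$ yields $T=\{x_1,\dots,x_m,a+c,a+c,b+d,b+d\}$, and the pairing $(a,d),(b,c)$ yields $U=\{x_1,\dots,x_m,a+d,a+d,b+c,b+c\}$. These are exactly the three multisets defined in the lemma.

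Next I would invoke Lemma~\ref{lem:weights} to conclude $S\le T$ and $S\le U$, which by definition of the relation $\le$ means $opt(S,j)\le opt(T,j)$ and $opt(S,j)\le opt(U,j)$ for every $j\ge 0$, in particular for $j=k-2$. The total sum Alice achieves in $k$ steps by opening with a given pairing equals the sum obtainable in the remaining $k-2$ steps from the corresponding post-opening configuration. Hence if an optimal $k$-step strategy opens with $(a,c),(b,d)$ (reaching $T$) or with $(a,d),(b,c)$ (reaching $U$), then the strategy that instead opens with $(a,b),(c,d)$ (reaching $S$) achieves a sum that is no larger, and is therefore also optimal. This gives the claimed optimal strategy beginning with $(a,b)$ and $(c,d)$.

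I expect essentially no obstacle here, since the real work lives in Lemma~\ref{lem:weights}; the only care needed is the bookkeeping that each move writes its sum on \emph{both} chosen sheets, so that each pairing of the four elements produces two copies of each partial sum—matching the multiset definitions exactly—and the verification that the ordering $a\le b\le c\le d$ is consistent with the hypotheses of the lemma.
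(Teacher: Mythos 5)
Your proposal is correct and matches the paper's approach exactly: the paper states that Corollary~\ref{cor_opt} follows immediately from Lemma~\ref{lem:weights}, and your write-up simply fills in the routine details of that implication (identifying the post-opening configurations with $S$, $T$, $U$ and relating $opt$ of the original configuration after $k$ steps to $opt$ of the resulting configuration after $k-2$ steps).
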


The following lemma finishes the proof of Theorem~\ref{thm:greedy}.
\begin{lemma}
Let $k\geq 0$, and let $S = \{s_1,s_2,\dots, s_n\}$ be the starting configuration, arranged such that $s_1\leq s_2 \leq \dots \leq s_n$.  Then there exists an optimal $k$-step strategy with the first move $(s_1,s_2)$.
\end{lemma}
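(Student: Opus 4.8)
The plan is to prove the lemma by a pure exchange argument, with no induction on $k$, built around one reusable statement. Call a sheet \emph{minimal} if it carries the smallest value in the current configuration. The heart of the matter is the following claim, which I will refer to as Claim~A: for every configuration and every number of steps at least $1$, there is an optimal strategy whose \emph{first} move uses a minimal sheet. I would prove Claim~A first, and then obtain the lemma from two applications of it followed by one application of the exchange results already established.

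To deduce the lemma, first apply Claim~A to $S$ itself, obtaining an optimal $k$-step strategy whose first move is $(s_1,q)$ for some value $q$; since equal-valued sheets are interchangeable, if $q=s_2$ we are already done, so assume $q>s_2$. Next, view the configuration $C_1$ reached after this first move as a fresh starting configuration for the remaining $k-1$ steps. Because the first move contributes the fixed amount $s_1+q$ to the total sum, the restriction of our strategy to $C_1$ is optimal for the $(k-1)$-step game on $C_1$; moreover the minimum value of $C_1$ equals $s_2$, since the sheets of values $s_1$ and $q>s_2$ have been replaced by two sheets of value $s_1+q>s_2$. Applying Claim~A to $C_1$ therefore produces an optimal strategy whose second overall move uses a sheet of value $s_2$. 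Finally I would analyse the first two moves, which now are $(s_1,q)$ and a move involving the value $s_2$. If the second move combines $s_2$ with a sheet untouched by the first move, of value $r$, then the first two moves act on four distinct sheets whose two smallest values are $s_1$ and $s_2$, and Corollary~\ref{cor_opt} supplies an optimal strategy starting with $(s_1,s_2)$. If instead the second move combines $s_2$ with an output $s_1+q$ of the first move, then the three values $s_1\le s_2\le q$ are played as $(s_1,q)=(a,c)$ then $(s_2,a+c)=(b,a+c)$, which is the second case of Lemma~\ref{lem_3best}, again yielding an optimal strategy starting with $(s_1,s_2)$.

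It remains to prove Claim~A, which is the substantive part. I would first check that \emph{some} optimal strategy uses a minimal sheet at all. Writing the final sum as $\sum_i c_i s_i$, where the coefficient vector $(c_i)$ is feasible and, crucially, stays feasible under any permutation of its entries, a rearrangement argument shows that an optimal strategy must pair a largest coefficient with a smallest value; since at least one coefficient exceeds $1$ once $k\ge 1$, a minimal sheet indeed gets coefficient larger than $1$ and so is used somewhere. I would then \emph{bubble} the first use of a minimal sheet to the front: among all optimal strategies, pick one minimizing the index $j$ of the earliest move that uses a minimal sheet, and suppose $j\ge 2$. Treating the configuration just before moves $j-1$ and $j$ as a fresh start, there are two cases. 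If move $j$ does not use an output of move $j-1$, the two moves are independent and commute, so swapping them uses a minimal sheet at step $j-1$, contradicting the minimality of $j$. Otherwise move $j$ combines the (still minimal) sheet of value $a$ with an output $b+c$ of move $j-1$, so the relevant three values $a\le b\le c$ are played as $(b,c)$ then $(a,b+c)$; this is exactly the first case of Lemma~\ref{lem_3best}, which produces an optimal strategy using the minimal value already at step $j-1$, again contradicting minimality. Hence $j=1$ and Claim~A holds.

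The main obstacle is precisely this propagation inside Claim~A: the smallest value may first be combined only deep inside an optimal strategy, whereas the available tools, Lemma~\ref{lem_3best} and Corollary~\ref{cor_opt}, speak only about the first two moves. The point that makes the bubbling legitimate is that any suffix of a strategy may be analysed as a game on the configuration reached so far, since the moves already played add a fixed amount to the final sum; this is what lets me invoke Lemma~\ref{lem_3best} at an arbitrary position while preserving global optimality. The remaining care is bookkeeping: handling ties among the $s_i$ (equal-valued sheets being interchangeable, so that using a minimal sheet and using the value $s_1$ coincide), and keeping the independent-versus-dependent split for the second move clean in the final step.
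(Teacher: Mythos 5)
Your proposal is correct, but it takes a genuinely different route from the paper's proof. The paper argues by induction on $k$: it takes an optimal strategy whose first move is $(s_i,s_j)\neq(s_1,s_2)$, applies the induction hypothesis to the configuration after that move to force the \emph{second} move to be the greedy one, and then runs a five-case analysis, each case closed by Lemma~\ref{lem_3best} or Corollary~\ref{cor_opt}. You replace the induction by an extremal (``bubbling'') argument: your Claim~A is proved by choosing an optimal strategy whose earliest move using a minimal sheet occurs as early as possible, and advancing that move either by commuting two disjoint moves or by applying Lemma~\ref{lem_3best} at an interior position of the strategy; the latter is legitimate because the suffix of an optimal strategy is an optimal strategy for the intermediate configuration---the same principle of optimality the paper uses implicitly when it invokes its induction hypothesis ``for the configuration that arose after the first step.'' Applying Claim~A twice then pins $s_1$ into the first move and $s_2$ into the second, and your final case split (second move disjoint from the first, handled by Corollary~\ref{cor_opt}; second move using an output of the first, handled by the second case of Lemma~\ref{lem_3best}) is exhaustive and correct, as is your observation that values never decrease, so an untouched sheet of value $s_1$ stays minimal. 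What your route buys: no induction on $k$, fewer cases, and a reusable statement (Claim~A) that controls an arbitrary position of a strategy rather than only the first move. What it costs: two ingredients the paper never needs explicitly---the rearrangement/permuted-feasibility argument showing that a minimal sheet is used at all, and the commutation of disjoint moves. Two trivial repairs are needed: for $k=1$ your deduction cannot invoke Claim~A on $C_1$ (that would require at least one remaining step), but in that case $q>s_2$ would directly contradict optimality of the one-step strategy, so $q=s_2$ is forced; and in the bubbling you should note that the minimal sheet used at move $j$ cannot itself be an output of move $j-1$ (any output has value at least $2s_1>s_1$), which is what makes your two cases exhaustive.
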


\begin{proof}
We prove the lemma by induction on $k$. The induction base is obvious for $k = 0$ and easy for $k=1$, so assume that $k\geq 2$. Consider an optimal $k$-step strategy. Assume that the first move is $(s_i,s_j)$, where $1\leq i < j \leq n$. If $i=1$ and $j=2$, then we are done. Otherwise, there are 5 cases to consider:
\begin{description}
\item [$\{i,j\}=\{1,3\}$.]
There are two subcases:

\begin{enumerate}
\item If $s_1 + s_3 \leq s_4$, then by the induction hypothesis, for the configuration that arose after the first step, there is an optimal $(k-1)$-step strategy in which Alice's first move is $(s_2,s_1+s_3)$. That is, there is an optimal $k$-step strategy for the initial configuration in which Alice plays $(s_1,s_3)$ and $(s_2,s_1+s_3)$ in the first two steps. Thus, by Lemma~\ref{lem_3best} there also exists an optimal $k$-step strategy (for the initial configuration) with first move $(s_1,s_2)$.

\item If $s_1 + s_3 > s_4$, then by the induction hypothesis, there is an optimal strategy with second move $(s_2,s_4)$. Then, by Lemma~\ref{lem:weights},  there exists an optimal  strategy with first move $(s_1,s_2)$.
\end{enumerate}

\item [$\{i,j\}=\{2,3\}$.]
As before, there are two subcases:
\begin{enumerate}
\item If $s_2 + s_3 \leq s_4$, then by the induction hypothesis, there is an optimal strategy with second move $(s_1,s_2+s_3)$.  As before, by Lemma~\ref{lem_3best},  there exists an optimal  strategy with  first move $(s_1,s_2)$.

\item If $s_2 + s_3 > s_4$, then by the induction hypothesis, there is an optimal strategy with second move $(s_1,s_4)$. Then, by Lemma~\ref{lem:weights}, there exists an optimal  strategy with first move $(s_1,s_2)$.
\end{enumerate}

\item [$i=1$ and $j>3$.] By the induction hypothesis, there is an optimal strategy with second move $(s_2,s_3)$, and thus by Lemma~\ref{lem:weights}, there exists an optimal strategy with first move $(s_1,s_2)$.

\item [$i=2$ and $j>3$.] By the induction hypothesis, there is an optimal strategy with second move $(s_1,s_3)$, and again, Lemma~\ref{lem:weights} implies that  there exists an optimal strategy with  first move $(s_1,s_2)$.

\item [$i>2$ and $j>3$.] By the induction hypothesis, there is an optimal strategy with second move $(s_1,s_2)$. Swapping the first and second moves gives an optimal strategy with first move $(s_1,s_2)$.\qedhere
\end{description}
\end{proof}

\subsection{The lower bound}

In this section we prove the lower bound in Theorem~\ref{thm:main}.
Recall that $b(n)$ is equal to the total number of $1$'s in the binary expansions of all integers from $0$ up to $n-1$.
It is known~\cite{binary} that $b(1)=0$ and $b(n)$ satisfies the recursive formula
$$b(n) = \max \{ b(n_1) + b(n_2) + \min \{ n_1, n_2\}: 1 \leq n_1, 1\leq n_2, n_1 + n_2 = n\} \:,$$
and the lower bound in Theorem~\ref{thm:main} follows by using induction and applying Corollary~\ref{cor:recursive}.
Moreover, McIlroy~\cite{binary} proved that $b(n) \geq {n} \log_2 \left( \frac{3}{4} n \right) / 2$. 

For every $n$ we also give an explicit construction of a good graph with $n$ vertices and $b(n)$ edges.
It is easy to see that $b(n)$ equals the number of edges
in the graph $G_n$ with vertex set $\{0,1,\dots,n-1\}$, and with vertices $i$ and $j$ being adjacent
if the binary expansions of $i$ and $j$ differ in exactly one digit.
This graph is an induced subgraph of the $\lceil \log_2(n) \rceil$-dimensional hypercube graph.
It can be shown by induction and Lemma~\ref{lem:Edgelabel} that the hypercube graph is good,
which implies that $G_n$ is also good
(since the restriction of a good edge-labelling for the supergraph
to the edges of the subgraph is a good edge-labelling for the subgraph).
Hence $G_n$ is a good graph with $n$ vertices and $b(n)$ edges.

\section{Concluding Remarks}

We proved that any $n$-vertex graph with a good edge-labelling has at most $n\log_2(n)/2$ edges,
and for every $n$ we constructed a good $n$-vertex graph with $n\log_2(n)/2 - O(n)$ edges.
Thus we proved 
$f(n) = n\log_2(n)/2 - O(n)$.
One can try to investigate the second order term of the function $f(n)$.
Perhaps it is the case that our construction is best possible; that is, in fact $f(n) = b(n)$?

It would be interesting to further investigate the connection between
having a good edge-labelling and other parameters of the graph;
in particular, the length of the shortest cycle (known as the girth) of the graph (see, e.g.,~\cite{conjecture}).
Ara{\'u}jo~et~al.~\cite{gn} proved that any planar graph with girth at least 6 has a good edge-labelling,
and asked whether 6 can be replaced with 5 in this result.
The first author~\cite{abbas} proved that any graph with maximum degree $\Delta$
and girth at least $40\Delta$ is good.
This does not seem to be tight, and improving the dependence on $\Delta$ is an interesting research direction.


\end{document}